\title{A Solution of a Tropical Linear Vector Equation\thanks{Advances in Computer Science: Proc. 6th WSEAS European Computing Conf. (ECC '12), WSEAS Press, 2012, pp.~244--249. (Recent Advances in Computer Engineering Series, Vol.~5)}
} 
\author{Nikolai Krivulin\thanks{Faculty of Mathematics and Mechanics, St.~Petersburg State University, 28 Universitetsky Ave., St.~Petersburg, 198504, Russia, 
nkk@math.spbu.ru.}
}
\date{}
\newtheorem{theorem}{Theorem}
\newtheorem{lemma}[theorem]{Lemma}
\newtheorem{proposition}{Proposition}
\begin{document}

\maketitle

\begin{abstract}
A linear vector equation is considered defined in terms of idempotent mathematics. To solve the equation, we apply an approach that is based on the analysis of distances between vectors in idempotent vector spaces and reduces the solution of the equation to that of a tropical optimization problem. Based on the approach, existence and uniqueness conditions are established for the solution, and a general solution to the equation is given.  
\\

\textit{Key-Words:} idempotent vector space, linear equation, tropical optimization problem, existence condition.
\end{abstract}

\section{Introduction}

Many applications of tropical mathematics \cite{Vorobjev1967Extremal,Cuninghame-green1979Minimax,Baccelli1993Synchronization,Litvinov1998Correspondence,Golan2003Semirings,Heidergott2006Max-plus,Butkovic2010Maxlinear} involve the solution of  linear equations defined on finite-dimensional semimodules over idempotent semifields (idempotent vector spases). One of the equation that often arise has the form $A\bm{x}=\bm{d}$, where $A$ and $\bm{d}$ are given matrix and vector, $\bm{x}$ is an unknown vector, and multiplication is thought of in terms of idempotent algebra. Since the equation can be considered as representing linear dependence between vectors, the development of efficient solution methods for the equation is of both practical and theoretical interest.

There are several existing solution methods, including those in \cite{Vorobjev1967Extremal,Cuninghame-green1979Minimax,Baccelli1993Synchronization}. In this paper another solution approach is described based on the analysis of distances between vectors in idempotent vector spaces. The results presented are based on implementation and further refinement of solutions published in the papers \cite{Krivulin2005Onsolution,Krivulin2009Onsolution,Krivulin2009Methods} and not available in English.

We start with a brief overview of preliminary algebraic definitions and results. Furthermore, the problem of solving the equation under study reduces to an optimization problem of finding the minimal distance from a vector to a linear span of vectors. We derive a comprehensive solutions to the optimization problem under quite general conditions. The obtained results are applied to give existence and uniqueness condition as well as to offer a general solution of the equation.

\section{Preliminaries}

In this section, we present algebraic definitions, notations, and results based on \cite{Krivulin2009Onsolution,Krivulin2009Methods} to provide a background for subsequent analysis and solutions. Additional details and further results are found in 
\cite{Vorobjev1967Extremal,Cuninghame-green1979Minimax,Baccelli1993Synchronization,Litvinov1998Correspondence,Golan2003Semirings,Heidergott2006Max-plus,Butkovic2010Maxlinear}.

We consider a set $\mathbb{X}$ endowed with addition $\oplus$ and multiplication $\otimes$ and equipped with the zero $\mathbb{0}$ and the identity $\mathbb{1}$. The system $\langle\mathbb{X},\mathbb{0},\mathbb{1},\oplus,\otimes\rangle$ is assumed to be a linearly ordered radicable commutative semiring with idempotent addition and invertible multiplication (idempotent semifield).

Idempotency of addition implies that $x\oplus x=x$ for all $x\in\mathbb{X}$. For any $x\in\mathbb{X}_{+}$, where $\mathbb{X}_{+}=\mathbb{X}\setminus\{\mathbb{0}\}$, there exists an inverse $x^{-1}$ such that $x^{-1}\otimes x=\mathbb{1}$. Furthermore, the power $x^{q}$ is defined for any $x\in\mathbb{X}_{+}$ and a rational $q$. Specifically, for any integer $p\geq0$, we have $x^{0}=\mathbb{1}$, $x^{p}=x^{p-1}x$, $x^{-p}=(x^{-1})^{p}$.

In what follows, we drop the multiplication sign $\otimes$ and use the power notation only in the above sense.

The linear order defined on $\mathbb{X}$ is assumed to be consistent with a partial order that is induced by idempotent addition and involves that $x\leq y$ if and only if $x\oplus y=y$. Below, the relation symbols and the operator $\min$ are thought in terms of this linear order.

As an example of the semifields under consideration, one can consider a semifield of real numbers $\mathbb{R}_{\max,+}=\langle\mathbb{R}\cup\{-\infty\},-\infty,0,\max,+\rangle$.

\subsection{Idempotent Vector Space}

Consider the Cartesian power $\mathbb{X}^{m}$ with column vectors as its elements. A vector with all components equal to $\mathbb{0}$ is called the zero vector. A vector is regular if it has no no zero components. 

For any two vectors $\bm{a}=(a_{i})$ and $\bm{b}=(b_{i})$ in $\mathbb{X}^{m}$, and a scalar $x\in\mathbb{X}$, addition and scalar multiplication are defined componentwise as
$$
\{\bm{a}\oplus\bm{b}\}_{i}
=
a_{i}\oplus b_{i},
\qquad
\{x\bm{a}\}_{i}
=
xa_{i}.
$$

Endowed with these operations, the set $\mathbb{X}^{m}$ forms a semimodule over the idempotent semifield $\mathbb{X}$, which is referred to as the idempotent vector space.

Fig.~\ref{F-VASM} illustrates the operations in the space $\mathbb{R}_{\max,+}^{2}$ with the Cartesian coordinates on the plane.
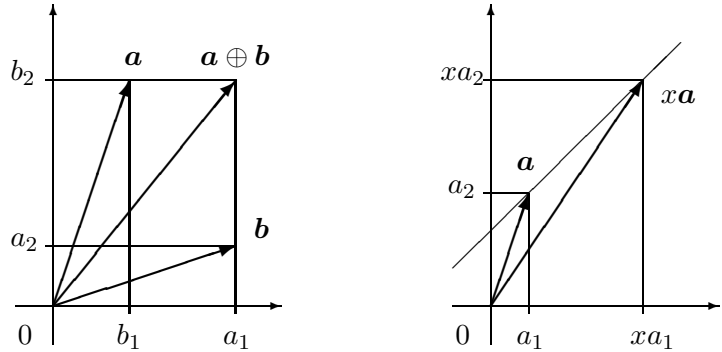
\begin{figure}[ht]
\setlength{\unitlength}{1mm}
\begin{center}
\begin{picture}(35,45)

\put(0,5){\vector(1,0){35}}
\put(5,0){\vector(0,1){45}}

\put(5,5){\thicklines\vector(1,3){10}}
\put(15,35){\line(0,-1){31}}

\put(5,5){\thicklines\vector(3,1){24}}
\put(29,13){\line(-1,0){25}}

\put(5,5){\thicklines\line(4,5){24}}
\put(26,31.75){\thicklines\vector(1,1){3}}

\put(29,35){\line(-1,0){25}}

\put(29,35){\line(0,-1){31}}

\put(0,0){$0$}

\put(13,0){$b_{1}$}
\put(27,0){$a_{1}$}

\put(-1,13){$a_{2}$}
\put(-1,35){$b_{2}$}

\put(14,37){$\bm{a}$}

\put(31,14){$\bm{b}$}

\put(24,37){$\bm{a}\oplus\bm{b}$}

\end{picture}
\hspace{20\unitlength}
\begin{picture}(35,45)

\put(0,5){\vector(1,0){35}}
\put(5,0){\vector(0,1){45}}

\put(5,5){\thicklines\vector(1,3){5}}
\put(10,20){\line(0,-1){16}}
\put(10,20){\line(-1,0){6}}

\put(5,5){\thicklines\vector(2,3){20}}
\put(25,35){\line(0,-1){31}}
\put(25,35){\line(-1,0){21}}

\put(0,10){\line(1,1){30}}

\put(0,0){$0$}
\put(8,23){$\bm{a}$}
\put(27,32){$x\bm{a}$}

\put(-1,20){$a_{2}$}
\put(-2,35){$xa_{2}$}

\put(8,0){$a_{1}$}
\put(23,0){$xa_{1}$}

\end{picture}
\end{center}
\caption{Vector addition (left) and multiplication by scalars (right) in $\mathbb{R}_{\max,+}^{2}$.}\label{F-VASM}
\end{figure}

Multiplication of a matrix $A=(a_{ij})\in\mathbb{X}^{m\times n}$ by a vector $\bm{x}=(x_{i})\in\mathbb{X}^{n}$ is routinely defined to result in a vector with components
$$
\{A\bm{x}\}_{i}
=
a_{i1}x_{1}\oplus\cdots\oplus a_{in}x_{n}.
$$

All above operations are componentwise isotone in each argument.

A matrix is regular if it has no zero rows.

For any nonzero column vector $\bm{x}=(x_{i})\in\mathbb{X}^{n}$, we define a row vector $\bm{x}^{-}=(x_{i}^{-})$, where $x_{i}^{-}=x_{i}^{-1}$ if $x_{i}\ne\mathbb{0}$, and $x_{i}^{-}=\mathbb{0}$ otherwise.

For any two regular vectors $\bm{x}$ and $\bm{y}$, the componentwise inequality $\bm{x}\leq\bm{y}$ implies $\bm{x}^{-}\geq\bm{y}^{-}$.

If $\bm{x}$ is a regular vector, then $\bm{x}^{-}\bm{x}=\mathbb{1}$ and $\bm{x}\bm{x}^{-}\geq I$, where $I$ is an identity matrix with the elements equal to  $\mathbb{1}$ on the diagonal, and $\mathbb{0}$ elsewhere.

\subsection{Linear Dependence}

Consider a system of vectors $\bm{a}_{1},\ldots,\bm{a}_{n}\in\mathbb{X}^{m}$. As usual, a vector $\bm{b}\in\mathbb{X}^{m}$ is linearly dependent on the system if it admits representation  as a linear combination $\bm{b}=x_{1}\bm{a}_{1}\oplus\cdots\oplus x_{n}\bm{a}_{n}$, where $x_{1},\ldots x_{n}\in\mathbb{X}$.

The set of all linear combinations of $\bm{a}_{1},\ldots,\bm{a}_{n}$ form a linear span denoted by $\mathop\mathrm{span}\{\bm{a}_{1},\ldots,\bm{a}_{n}\}$. An example of a linear span in $\mathbb{R}_{\max,+}^{2}$ is given in Fig.~\ref{F-LC}.
\begin{figure}[ht]
\setlength{\unitlength}{1mm}
\begin{center}

\begin{picture}(50,40)

\put(0,5){\vector(1,0){50}}
\put(5,0){\vector(0,1){40}}

\put(5,5){\thicklines\vector(1,3){4.25}}
\put(5,5){\thicklines\vector(2,3){17}}

\put(5,5){\thicklines\vector(4,1){23}}
\put(5,5){\thicklines\vector(2,1){34}}

\put(1.5,10){\thicklines\line(1,1){26}}
\multiput(2.5,11)(1,1){25}{\line(1,0){1}}

\put(17,0){\thicklines\line(1,1){26}}
\multiput(18,1)(1,1){25}{\line(-1,0){1}}

\put(39,30.5){\line(-1,0){35}}
\put(39,30.5){\line(0,-1){26.5}}

\put(10,18){\line(-1,0){6}}
\put(28,11){\line(0,-1){7}}

\put(5,5){\thicklines\vector(4,3){34}}

\put(0,0){$0$}

\put(7,21){$\bm{a}_{2}$}
\put(0,24){$x_{2}$}
\put(15,34){$x_{2}\bm{a}_{2}$}

\put(30,9){$\bm{a}_{1}$}
\put(32,0){$x_{1}$}
\put(42,20){$x_{1}\bm{a}_{1}$}

\put(31,34){$x_{1}\bm{a}_{1}\oplus x_{2}\bm{a}_{2}$}

\end{picture}
\end{center}
\caption{A linear span of vectors $\bm{a}_{1},\bm{a}_{2}$ in $\mathbb{R}_{\max,+}^{2}$.}\label{F-LC}
\end{figure}
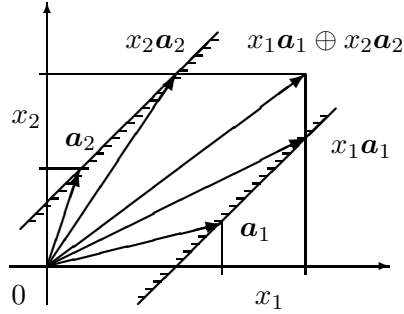

A system of vectors $\bm{a}_{1},\ldots,\bm{a}_{n}$ is linearly dependent if at least one its vector is linearly dependent on others, and it is linear independent otherwise.

A system of nonzero vectors $\bm{a}_{1},\ldots,\bm{a}_{n}$ is a minimal generating system for a vector $\bm{b}$, if this vector is linearly dependent on the system and independent of any of its subsystems.

Let us verify that if vectors $\bm{a}_{1},\ldots,\bm{a}_{n}$ are a minimal generating system for a vector $\bm{b}$, then representation of $\bm{b}$ as a linear combination of $\bm{a}_{1},\ldots,\bm{a}_{n}$ is unique. Suppose there are two linear combinations
$$
\bm{b}=x_{1}\bm{a}_{1}\oplus\cdots\oplus x_{n}\bm{a}_{n}=x_{1}^{\prime}\bm{a}_{1}\oplus\cdots\oplus x_{n}^{\prime}\bm{a}_{n},
$$
where $x_{i}^{\prime}\ne x_{i}$ for some index $i=1,\ldots,n$.

Assuming for definiteness that $x_{i}^{\prime}<x_{i}$, we have $\bm{b}\geq x_{i}\bm{a}_{i}>x_{i}^{\prime}\bm{a}_{i}$. Therefore, the term $x_{i}^{\prime}\bm{a}_{i}$ does not affect $\bm{b}$ and so may be omitted, which contradicts with the minimality of the system $\bm{a}_{1},\ldots,\bm{a}_{n}$.

\subsection{Distance Function}

For any vector $\bm{a}\in\mathbb{X}^{m}$, we introduce the support as the index set $\mathop\mathrm{supp}(\bm{a})=\{i|a_{i}\ne\mathbb{0}\}$.

The distance between nonzero vectors $\bm{a},\bm{b}\in\mathbb{X}^{m}$ with $\mathop\mathrm{supp}(\bm{a})=\mathop\mathrm{supp}(\bm{b})$ is defined by a function
\begin{equation}
\rho(\bm{a},\bm{b})
=
\bigoplus_{i\in\mathop\mathrm{supp}(\bm{a})}\left(b_{i}^{-1} a_{i}\oplus a_{i}^{-1} b_{i}\right).
\label{E-rhoab}
\end{equation}

We put $ \rho(\bm{a},\bm{b})=\infty$ if $\mathop\mathrm{supp}(\bm{a})\ne\mathop\mathrm{supp}(\bm{b})$, and $\rho(\bm{a},\bm{b})=\mathbb{1}$ if $\bm{a}=\bm{b}=\mathbb{0}$.

Note that in $\mathbb{R}_{\max,+}^{m}$, the function $\rho$ coincides for all vectors $\bm{a},\bm{b}\in\mathbb{R}^{m}$ with the Chebyshev metric
$$
\rho_{\infty}(\bm{a},\bm{b})
=
\max_{1\leq i\leq m}|b_{i}-a_{i}|.
$$

\section{Evaluation of Distances}

Let $\bm{a}_{1},\ldots,\bm{a}_{n}\in\mathbb{X}^{m}$ be given vectors. Denote by $A=(\bm{a}_{1},\ldots,\bm{a}_{n})$ a matrix composed of the vectors, and by $\mathcal{A}=\mathop\mathrm{span}\{\bm{a}_{1},\ldots,\bm{a}_{n}\}$ their linear span.

Take a vector $\bm{d}\in\mathbb{X}^{m}$ and consider the problem of computing the distance from $\bm{d}$ to $\mathcal{A}$ defined as
$$
\rho(\mathcal{A},\bm{d})
=
\min_{\bm{a}\in\mathcal{A}}\rho(\bm{a},\bm{d}).
$$

Taking into account that every vector $\bm{a}\in\mathcal{A}$ can be represented as $\bm{a}=A\bm{x}$ for some vector $\bm{x}\in\mathbb{X}^{n}$, we arrive at the problem of calculating
\begin{equation}
\rho(\mathcal{A},\bm{d})
=
\min_{\bm{x}\in\mathbb{X}^{n}}\rho(A\bm{x},\bm{d}).
\label{E-rhoAd}
\end{equation}

Suppose $\bm{d}=\mathbb{0}$. Considering that $\mathcal{A}$ always contains the zero vector, we obviously get $\rho(\mathcal{A},\bm{d})=\mathbb{1}$.

Let some of the vectors $\bm{a}_{1},\ldots,\bm{a}_{n}$ be zero. Since zero vectors do not affect the linear span $\mathcal{A}$, they can be removed with no change of distances. When all vectors are zero and thus $\mathcal{A}=\{\mathbb{0}\}$, we have $\rho(\mathcal{A},\bm{d})=\mathbb{0}$ if $\bm{d}=\mathbb{0}$, and $\rho(\mathcal{A},\bm{d})=\infty$ otherwise.

From here on we assume that $\bm{d}\ne\mathbb{0}$ and $\bm{a}_{i}\ne\mathbb{0}$ for all $i=1,\ldots,n$.

Suppose the vector $\bm{d}=(d_{i})$ may have zero components and so be irregular. For the matrix $A=(a_{ij})$, we introduce a matrix $\widehat{A}=(\widehat{a}_{ij})$ as follows. We define two sets of indices $I=\{i|d_{i}=\mathbb{0}\}$ and $J=\{j|a_{ij}>\mathbb{0}, i\in I\}$, and then deteremine the entries in $\widehat{A}$ according to the conditions
$$
\widehat{a}_{ij}
=
\begin{cases}
\mathbb{0},	& \text{if $i\notin I$ and $j\in J$}, \\
a_{ij},			& \text{otherwise}.
\end{cases}
$$

The matrix $A$ may differ from $\widehat{A}$ only in those columns that have nonzero intersections with the rows corresponding to zero components in $\bm{d}$. In the matrix $\widehat{A}$, these columns have all entries that are not located at the intersections set to zero. The matrix $\widehat{A}$ and the vector $\bm{d}$ are said to be consistent with each other. 

Note that when $\bm{d}$ is regular, we have $\widehat{A}=A$.

\begin{proposition}
For all $\bm{x}=(x_{i})\in\mathbb{X}^{n}$ it holds that
$$
\rho(A\bm{x},\bm{d})
=
\rho(\widehat{A}\bm{x},\bm{d}).
$$
\end{proposition}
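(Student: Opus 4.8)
The plan is to exploit the convention that $\rho$ takes the value $\infty$ as soon as the supports of its two arguments fail to coincide. Thus $\rho(A\bm{x},\bm{d})$ can be finite only for those $\bm{x}$ with $\mathop\mathrm{supp}(A\bm{x})=\mathop\mathrm{supp}(\bm{d})$, and similarly for $\widehat{A}$. I would therefore prove the following dichotomy: for every $\bm{x}\in\mathbb{X}^{n}$, either $A\bm{x}=\widehat{A}\bm{x}$, in which case the asserted identity is immediate, or $\rho(A\bm{x},\bm{d})=\rho(\widehat{A}\bm{x},\bm{d})=\infty$, in which case it holds trivially as well.

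The first step is to record what the construction of $\widehat{A}$ gives us. By definition $\widehat{A}$ and $A$ can differ only in entries indexed by $i\notin I$ and $j\in J$, and in every such position $\widehat{a}_{ij}=\mathbb{0}$; elsewhere $\widehat{a}_{ij}=a_{ij}$. Hence $\widehat{a}_{ij}\le a_{ij}$ in all positions, so $\widehat{A}\bm{x}\le A\bm{x}$ componentwise by isotonicity of the matrix–vector product, and moreover $\{A\bm{x}\}_{i}=\{\widehat{A}\bm{x}\}_{i}$ for every row index $i\in I$ and every $\bm{x}$, since those rows are untouched.

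Now suppose $A\bm{x}\ne\widehat{A}\bm{x}$. By the previous remark the two vectors must differ in some coordinate $i\notin I$, and there $\{\widehat{A}\bm{x}\}_{i}<\{A\bm{x}\}_{i}$; writing out the two matrix–vector products and isolating the extra terms $\bigoplus_{j\in J}a_{ij}x_{j}$ that distinguish them shows that these terms cannot all vanish, so $x_{j}\ne\mathbb{0}$ for at least one $j\in J$. Fix such a $j$. By the definition of $J$ there is an index $i_{0}\in I$ with $a_{i_{0}j}>\mathbb{0}$, whence $\{A\bm{x}\}_{i_{0}}=\{\widehat{A}\bm{x}\}_{i_{0}}\ge a_{i_{0}j}x_{j}>\mathbb{0}$, while $d_{i_{0}}=\mathbb{0}$ because $i_{0}\in I$. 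Thus $i_{0}$ belongs to both $\mathop\mathrm{supp}(A\bm{x})$ and $\mathop\mathrm{supp}(\widehat{A}\bm{x})$ but not to $\mathop\mathrm{supp}(\bm{d})$, so the supports mismatch in both cases and $\rho(A\bm{x},\bm{d})=\rho(\widehat{A}\bm{x},\bm{d})=\infty$. Together with the trivial case $A\bm{x}=\widehat{A}\bm{x}$ this proves the proposition.

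I do not expect a genuine obstacle; the argument is careful bookkeeping with the index sets $I$ and $J$ together with the support convention for $\rho$. The one place that warrants attention is the inference that some $x_{j}$ with $j\in J$ is nonzero whenever $A\bm{x}\ne\widehat{A}\bm{x}$: one must be explicit that the discrepancy occurs only in rows $i\notin I$ and there is produced precisely by the columns $j\in J$, so that nonvanishing of the difference in that row forces a nonzero product $a_{ij}x_{j}$ with $j\in J$.
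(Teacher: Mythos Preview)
Your argument is correct and follows essentially the same route as the paper: both hinge on the observation that $\mathop\mathrm{supp}(A\bm{x})=\mathop\mathrm{supp}(\bm{d})$ forces $x_{j}=\mathbb{0}$ for every $j\in J$, which in turn makes $A\bm{x}$ and $\widehat{A}\bm{x}$ coincide. Your dichotomy (either $A\bm{x}=\widehat{A}\bm{x}$, or both distances are $\infty$) is simply the contrapositive packaging of the paper's two-case argument, and your justification of the support mismatch in the second branch is somewhat more explicit than the paper's.
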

\begin{proof}
With a regular $\bm{d}$ the statement becomes trivial and so assume $\bm{d}\ne\mathbb{0}$ to have zero components.

Suppose that $\rho(A\bm{x},\bm{d})<\infty$, which occurs only under the condition $\mathop\mathrm{supp}(A\bm{x})=\mathop\mathrm{supp}(\bm{d})$. The fulfillment of the condition is equivalent to equalities $a_{i1}x_{1}\oplus\cdots\oplus a_{in}x_{n}=\mathbb{0}$ that must be true whenever $d_{i}=\mathbb{0}$. To provide the equalities, we must put $x_{j}=\mathbb{0}$ for all indices $j$ such that $a_{ij}\ne\mathbb{0}$ for at least one index $i$ with $d_{i}=\mathbb{0}$. In this case, replacing $A$ with $\widehat{A}$ leaves the value of $\rho(A\bm{x},\bm{d})<\infty$ unchanged.

Since the condition $\mathop\mathrm{supp}(A\bm{x})\ne\mathop\mathrm{supp}(\bm{d})$ implies $\mathop\mathrm{supp}(\widehat{A}\bm{x})\ne\mathop\mathrm{supp}(\bm{d})$ and vice versa, the statement is also true when $\rho(A\bm{x},\bm{d})=\infty$.
\end{proof}

With the above result, we may now concentrate only on the problems when $A$ is consistent with $\bm{d}$.

In order to describe the solution of problem \eqref{E-rhoAd}, we need the following notation. For any consistent matrix $A$ and a vector $\bm{d}$, we define a residual value
$$
\Delta_{A}(\bm{d})
=
\sqrt{(A(\bm{d}^{-}A)^{-})^{-}\bm{d}}
$$
if $A$ is regular, and $\Delta_{A}(\bm{d})=\infty$ otherwise.

In what follows, we drop subscripts and arguments in $\Delta_{A}(\bm{d})$ and write $\Delta$ if no confusion arises.
 
Below we find the solution when the vector $\bm{d}$ is regular and then extend this result to irregular vectors.
 
\subsection{Regular Vector}

Suppose that $\bm{d}$ is a regular vector. First we verify that the minimum of $\rho(A\bm{x},\bm{d})$ over $\mathbb{X}^{n}$ in \eqref{E-rhoAd} can be found by examining only regular vectors $\bm{x}\in\mathbb{X}_{+}^{n}$.

\begin{proposition}
If the vector $\bm{d}$ is regular, then
$$
\rho(\mathcal{A},\bm{d})
=
\min_{\bm{x}\in\mathbb{X}_{+}^{n}}\rho(A\bm{x},\bm{d}).
$$
\end{proposition}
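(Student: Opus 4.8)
The goal is to show that restricting the minimization in \eqref{E-rhoAd} to regular vectors $\bm{x}\in\mathbb{X}_{+}^{n}$ does not change the infimum when $\bm{d}$ is regular. The plan is to start from an arbitrary $\bm{x}\in\mathbb{X}^{n}$ achieving a finite value $\rho(A\bm{x},\bm{d})<\infty$ — if no such $\bm{x}$ exists the claim is vacuous on the left, but since $\bm{d}$ is regular and $\rho$ can always be made finite by scaling a single nonzero column to match supports, one expects the finite case to be the relevant one — and to modify it into a regular $\bm{x}^{\prime}\in\mathbb{X}_{+}^{n}$ without increasing $\rho(A\bm{x}^{\prime},\bm{d})$. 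The natural modification is to replace every zero component $x_{j}=\mathbb{0}$ by a sufficiently small positive scalar $\varepsilon\in\mathbb{X}_{+}$.

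First I would observe that, because $\bm{d}$ is regular, $\mathop\mathrm{supp}(A\bm{x})=\mathop\mathrm{supp}(\bm{d})$ forces $A\bm{x}$ to be regular, so $A$ must be a regular matrix (no zero rows) and the value $\rho(A\bm{x},\bm{d})=\bigoplus_{i}(d_{i}^{-1}\{A\bm{x}\}_{i}\oplus\{A\bm{x}\}_{i}^{-1}d_{i})$ is a finite element of $\mathbb{X}_{+}$. Next, for a scalar $\varepsilon\in\mathbb{X}_{+}$, form $\bm{x}^{\prime}$ by setting $x_{j}^{\prime}=x_{j}$ where $x_{j}\ne\mathbb{0}$ and $x_{j}^{\prime}=\varepsilon$ where $x_{j}=\mathbb{0}$. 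Then $\bm{x}^{\prime}\geq\bm{x}$ componentwise, hence by isotonicity of matrix–vector multiplication $A\bm{x}^{\prime}\geq A\bm{x}$, and since $A\bm{x}$ is already regular with support equal to that of $\bm{d}$, so is $A\bm{x}^{\prime}$; thus $\rho(A\bm{x}^{\prime},\bm{d})$ is finite. The key point is then to bound $\rho(A\bm{x}^{\prime},\bm{d})$ from above by $\rho(A\bm{x},\bm{d})$ for $\varepsilon$ small enough: one writes $\{A\bm{x}^{\prime}\}_{i}=\{A\bm{x}\}_{i}\oplus\varepsilon\bigoplus_{j:x_{j}=\mathbb{0}}a_{ij}$, and observes that for $\varepsilon\leq\varepsilon_{0}$ with $\varepsilon_{0}$ chosen so that $\varepsilon_{0}a_{ij}\leq\{A\bm{x}\}_{i}$ for all relevant $i,j$ (possible since each $\{A\bm{x}\}_{i}\in\mathbb{X}_{+}$ and $\mathbb{X}$ is a semifield), the added terms are absorbed and $A\bm{x}^{\prime}=A\bm{x}$ exactly. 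Hence $\rho(A\bm{x}^{\prime},\bm{d})=\rho(A\bm{x},\bm{d})$.

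Finally I would assemble the inequality both ways: $\min_{\bm{x}\in\mathbb{X}_{+}^{n}}\rho(A\bm{x},\bm{d})\geq\min_{\bm{x}\in\mathbb{X}^{n}}\rho(A\bm{x},\bm{d})=\rho(\mathcal{A},\bm{d})$ trivially since $\mathbb{X}_{+}^{n}\subseteq\mathbb{X}^{n}$, while the construction above shows that for any $\bm{x}$ attaining (or approaching) the unrestricted minimum there is a regular $\bm{x}^{\prime}$ with $\rho(A\bm{x}^{\prime},\bm{d})\leq\rho(A\bm{x},\bm{d})$, giving the reverse inequality. I expect the main obstacle to be the bookkeeping around the existence of a suitably small $\varepsilon$: one must be careful that the "small" scalar works uniformly over all coordinates $i$ and all zero-indexed columns $j$ simultaneously, and that the argument does not secretly assume $A$ has no zero column — but a genuinely zero column contributes nothing to $A\bm{x}$ for any value of $x_{j}$, so it can be handled by noting its value is irrelevant and may be set to $\mathbb{1}$. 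A secondary subtlety is confirming that the unrestricted minimum is actually attained (so that "take an optimal $\bm{x}$" is legitimate); if attainment has not yet been established, the cleaner route is to argue directly with an arbitrary $\bm{x}$ and the perturbed $\bm{x}^{\prime}$, showing the value sets coincide without invoking a minimizer.
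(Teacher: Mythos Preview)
Your proposal is correct and follows essentially the same approach as the paper: replace each zero component of $\bm{x}$ by a positive scalar small enough that $A\bm{x}$ is unchanged, then conclude the two minima coincide. The paper differs only cosmetically---it treats one zero component $x_{j}$ at a time with the explicit choice $\varepsilon=\min\{a_{ij}^{-1}y_{i}\mid a_{ij}>\mathbb{0}\}$, and it dispatches the case $\rho(A\bm{x},\bm{d})=\infty$ for all $\bm{x}$ explicitly at the outset rather than arguing it away.
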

\begin{proof}
Take a vector $\bm{y}=A\bm{x}$ such that $\rho(A\bm{x},\bm{d})$ achieves the minimum value. If $\bm{y}$ is irregular and so has zero components, then $\mathop\mathrm{supp}(\bm{y})\ne\mathop\mathrm{supp}(\bm{d})$, and thus $\rho(A\bm{x},\bm{d})=\infty$ for all $\bm{x}$, including $\bm{x}>\mathbb{0}$.

Suppose $\bm{y}=(y_{1},\ldots,y_{m})^{T}>\mathbb{0}$, and assume a corresponding vector $\bm{x}$ to have a zero component, say $x_{j}=\mathbb{0}$. We define the set $I=\{i|a_{ij}>\mathbb{0}\}\ne\emptyset$ and find the number $\varepsilon=\min\{a_{ij}^{-1}y_{i}|i\in I\}>\mathbb{0}$.

It remains to note that with $x_{j}=\varepsilon$ in place of $x_{j}=\mathbb{0}$, all components of $\bm{y}$ together with the minimum value of $\rho(A\bm{x},\bm{d})$ remain unchanged.
\end{proof}

The next statement reveals the meaning of the residual $\Delta=\Delta_{A}(\bm{d})$ in terms of distances.
\begin{lemma}\label{L-Lb}
If the vector $\bm{d}$ is regular, then it holds that
$$
\rho(\mathcal{A},\bm{d})
=
\min_{\bm{x}\in\mathbb{X}_{+}^{n}}\rho(A\bm{x},\bm{d})
=
\Delta,
$$
where the minimum is attained at $\bm{x}=\Delta(\bm{d}^{-}A)^{-}$.
\end{lemma}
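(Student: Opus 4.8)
The plan is to produce an explicit minimizer together with a matching lower bound, both organized around the auxiliary vector $\bm{x}^{\ast}=(\bm{d}^{-}A)^{-}$. First I would record the two residuation facts that drive everything. Fact (i): since $\bm{d}$ is regular and every column of $A$ is nonzero, $\bm{d}^{-}A$ is a regular row vector, so $\bm{x}^{\ast}$ is a regular column vector and (as $A$ is regular) $A\bm{x}^{\ast}$ is regular; moreover the componentwise estimate $a_{ij}(\bm{d}^{-}A)_{j}^{-1}\leq d_{i}$ gives $A\bm{x}^{\ast}\leq\bm{d}$, and $\bm{x}^{\ast}$ is in fact the greatest $\bm{x}$ with $A\bm{x}\leq\bm{d}$; more generally $A\bm{x}\leq t\bm{d}$ forces $\bm{x}\leq t\bm{x}^{\ast}$ for any $t\in\mathbb{X}_{+}$, because $A\bm{x}\leq t\bm{d}$ iff $\bm{x}\leq((t\bm{d})^{-}A)^{-}=t(\bm{d}^{-}A)^{-}$. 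Fact (ii): for regular vectors $\bm{a},\bm{b}$ of equal support and $t\in\mathbb{X}_{+}$, the definition \eqref{E-rhoab} shows directly that $\rho(\bm{a},\bm{b})\leq t$ if and only if $t^{-1}\bm{b}\leq\bm{a}\leq t\bm{b}$.

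Next I would read off $\Delta$ from $\bm{x}^{\ast}$. Because $A\bm{x}^{\ast}\leq\bm{d}$ and both vectors are regular, $\rho(A\bm{x}^{\ast},\bm{d})=(A\bm{x}^{\ast})^{-}\bm{d}=(A(\bm{d}^{-}A)^{-})^{-}\bm{d}=\Delta^{2}$, so in particular $\Delta\geq\mathbb{1}$; here radicability of $\mathbb{X}$ is exactly what lets us take $\Delta=\sqrt{\Delta^{2}}$. Rewriting $(A\bm{x}^{\ast})^{-}\bm{d}=\Delta^{2}$ componentwise yields the two-sided bound $\Delta^{-2}\bm{d}\leq A\bm{x}^{\ast}\leq\bm{d}$.

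For the upper bound I would test the candidate $\bm{x}_{0}=\Delta(\bm{d}^{-}A)^{-}=\Delta\bm{x}^{\ast}$: multiplying the previous bound by $\Delta$ gives $\Delta^{-1}\bm{d}\leq A\bm{x}_{0}\leq\Delta\bm{d}$, whence $\rho(A\bm{x}_{0},\bm{d})\leq\Delta$ by fact (ii), so $\rho(\mathcal{A},\bm{d})\leq\Delta$. For the lower bound, take an arbitrary $\bm{x}$ and set $\mu=\rho(A\bm{x},\bm{d})$; if $\mu=\infty$ there is nothing to prove, so assume $\mu<\infty$, which forces $\mathop\mathrm{supp}(A\bm{x})=\mathop\mathrm{supp}(\bm{d})$. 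Fact (ii) gives $\mu^{-1}\bm{d}\leq A\bm{x}\leq\mu\bm{d}$; the right inequality together with fact (i) gives $\bm{x}\leq\mu\bm{x}^{\ast}$, hence $A\bm{x}\leq\mu A\bm{x}^{\ast}$; combining with $\mu^{-1}\bm{d}\leq A\bm{x}$ yields $\mu^{-1}\bm{d}\leq\mu A\bm{x}^{\ast}$, that is $A\bm{x}^{\ast}\geq\mu^{-2}\bm{d}$, that is $\Delta^{2}=(A\bm{x}^{\ast})^{-}\bm{d}\leq\mu^{2}$, so $\mu\geq\Delta$. Thus $\rho(\mathcal{A},\bm{d})=\Delta$ with the value attained at the regular vector $\bm{x}_{0}$, and since the lower bound holds for every $\bm{x}$ (and, by the preceding Proposition, the minimum over $\mathbb{X}^{n}$ already equals the minimum over $\mathbb{X}_{+}^{n}$) both displayed equalities follow.

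I expect the main obstacle to be bookkeeping rather than a hard computation: one must keep supports and regularity under control so that every $\rho$ that appears is finite and the operation $(\cdot)^{-}$ behaves as a genuine inverse, but once $A$ is known to be regular and consistent with the regular $\bm{d}$ this is routine. The one genuinely nonobvious step is the choice of the scaling factor $\Delta$ — the square root of the one-sided error $(A\bm{x}^{\ast})^{-}\bm{d}$ — which symmetrizes the under- and over-estimation of $\bm{d}$ by $A\bm{x}$; everything else is residuation.
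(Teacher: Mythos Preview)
Your argument is correct and follows essentially the same residuation route as the paper: from $\rho(A\bm{x},\bm{d})=\mu$ extract $A\bm{x}\leq\mu\bm{d}$, pass to $\bm{x}\leq\mu(\bm{d}^{-}A)^{-}$, and feed this back into the other half of the distance to obtain $\mu\geq\Delta$, then verify that $\bm{x}=\Delta(\bm{d}^{-}A)^{-}$ attains the bound. The one omission is the case where $A$ is irregular (then $\mathop\mathrm{supp}(A\bm{x})\ne\mathop\mathrm{supp}(\bm{d})$ for every $\bm{x}$, so $\rho(\mathcal{A},\bm{d})=\infty=\Delta$), which the paper dispatches in its first line and you should add explicitly.
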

\begin{proof}
Let the matrix $A$ be irregular. Then we have $\mathop\mathrm{supp}(A\bm{x})\ne\mathop\mathrm{supp}(\bm{d})$ and $\rho(\mathcal{A},\bm{d})=\infty$. Since, by definition, $\Delta=\infty$, the statement is true in this case.

Suppose $A$ is regular. Taking into account \eqref{E-rhoab}, we arrive at an optimization problem to find
$$
\min_{\bm{x}\in\mathbb{X}_{+}^{n}}\ (\bm{d}^{-}A\bm{x}\oplus(A\bm{x})^{-}\bm{d}).
$$ 

Take any vector $\bm{y}=A\bm{x}$ such that $\bm{x}>\mathbb{0}$, and define $r=\bm{d}^{-}A\bm{x}\oplus(A\bm{x})^{-}\bm{d}>\mathbb{0}$.

From the definition of $r$, we have two inequalities
$$
r\geq\bm{d}^{-}A\bm{x},
\qquad
r\geq(A\bm{x})^{-}\bm{d}.
$$

Right multiplication of the first inequality by $\bm{x}^{-}$ gives $r\bm{x}^{-}\geq\bm{d}^{-}A\bm{x}\bm{x}^{-}\geq\bm{d}^{-}A$. Then we obtain $\bm{x}\leq r(\bm{d}^{-}A)^{-}$ and $(A\bm{x})^{-}\geq r^{-1}(A(\bm{d}^{-}A)^{-})^{-}$.

Substitution into the second inequality results in $r\geq r^{-1}(A(\bm{d}^{-}A)^{-})^{-}\bm{d}=r^{-1}\Delta^{2}$, and so in $r\geq\Delta$.

It remains to verify that $r=\Delta$ when we take $\bm{x}=\Delta(\bm{d}^{-}A)^{-}$. Indeed, substitution of the $\bm{x}$ gives $r=\Delta\bm{d}^{-}A(\bm{d}^{-}A)^{-}\oplus\Delta^{-1}(A(\bm{d}^{-}A)^{-})^{-}\bm{d}=\Delta$.

Finally note that the above vector $\bm{x}$ corresponds to the vector $\bm{y}=\Delta A(\bm{d}^{-}A)^{-}\in\mathcal{A}$.
\end{proof}

Examples of a linear span $\mathcal{A}=\mathop\mathrm{span}(\bm{a}_{1},\bm{a}_{2})$ and vectors $\bm{d}$ in the space $\mathbb{R}_{\max,+}^{2}$ are given in Fig.~\ref{F-Lb}.
\begin{figure}[ht]
\setlength{\unitlength}{1mm}
\begin{center}
\begin{picture}(35,45)

\put(0,5){\vector(1,0){35}}
\put(5,0){\vector(0,1){45}}

\put(5,5){\thicklines\vector(1,4){4}}

\put(5,5){\thicklines\vector(2,-1){5}}

\put(0,12){\line(1,1){25}}
\put(0,12){\thicklines\line(1,1){25}}
\multiput(1,13)(1,1){24}{\line(1,0){1}}

\put(7.5,0){\line(1,1){25}}
\put(7.5,0){\thicklines\line(1,1){25}}
\multiput(8,0.5)(1,1){25}{\line(-1,0){1}}

\put(5,5){\thicklines\vector(3,4){16}}

\put(13,0){$\bm{a}_{1}$}
\put(7,26){$\bm{a}_{2}$}
\put(22,25){$\bm{d}$}

\put(28,31){$\mathcal{A}$}

\put(9,38){$\Delta=\mathbb{1}$}

\end{picture}
\hspace{20\unitlength}
\begin{picture}(40,45)

\put(0,5){\vector(1,0){40}}
\put(5,0){\vector(0,1){45}}

\put(5,5){\thicklines\vector(1,4){4}}

\put(5,5){\thicklines\vector(2,-1){5}}

\put(0,12){\line(1,1){25}}
\put(0,12){\thicklines\line(1,1){25}}
\multiput(1,13)(1,1){24}{\line(1,0){1}}

\put(7.5,0){\line(1,1){25}}
\multiput(8,0.5)(1,1){25}{\line(-1,0){1}}
\put(7.5,0){\thicklines\line(1,1){25}}

\put(27.5,20){\line(1,-1){7.5}}

\put(5,5){\thicklines\vector(3,2){22.5}}

\put(5,5){\thicklines\vector(4,1){30}}

\multiput(27.5,20)(0,-2.8){6}{\line(0,-1){2}}

\multiput(35,12.5)(0,-3.2){3}{\line(0,-1){2.2}}

\put(13,0){$\bm{a}_{1}$}
\put(7,26){$\bm{a}_{2}$}
\put(35,14){$\bm{d}$}

\put(25,23){$\bm{y}$}

\put(28,31){$\mathcal{A}$}

\put(9,38){$\Delta>\mathbb{1}$}

\put(29.5,0){$\Delta$}

\end{picture}
\end{center}
\caption{A linear span $\mathcal{A}$ and vectors $\bm{d}$ in $\mathbb{R}_{\max,+}^{2}$ when $\Delta=\mathbb{1}$ (left) and $\Delta>\mathbb{1}$ (right).}\label{F-Lb}
\end{figure}
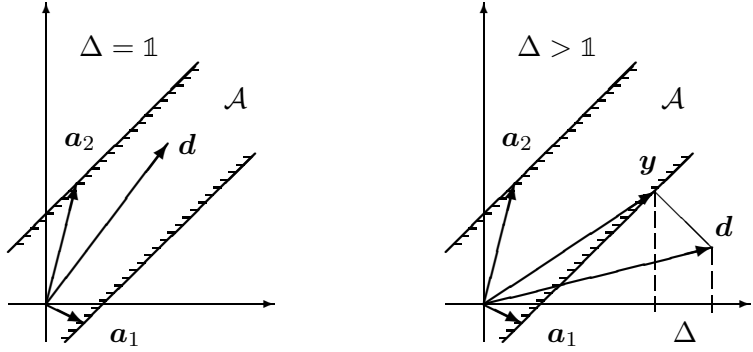

\subsection{Arbitrary Nonzero Vector}

Now we examine the distance between the linear span $\mathcal{A}$ and a vector $\bm{b}\ne\mathbb{0}$. It will suffice to consider only the case when the matrix $A$ is consistent with $\bm{b}$.

\begin{theorem}\label{T-Lb}
Suppose the matrix $A$ is consistent with the vector $\bm{d}\ne\mathbb{0}$. Then it holds that
$$
\rho(\mathcal{A},\bm{d})
=
\min_{\bm{x}\in\mathbb{X}_{+}^{n}}\rho(A\bm{x},\bm{d})
=
\Delta,
$$
where the minimum is attained at $\bm{x}=\Delta(\bm{d}^{-}A)^{-}$.
\end{theorem}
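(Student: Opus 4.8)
\emph{Proof plan.}
The plan is to strip off the zero coordinates of $\bm{d}$ and reduce to the regular case already settled in Lemma~\ref{L-Lb}. If $\bm{d}$ is regular the statement is exactly Lemma~\ref{L-Lb}, so assume $\bm{d}$ has zero components and keep the index sets $I=\{i\mid d_{i}=\mathbb{0}\}$ and $J=\{j\mid a_{ij}>\mathbb{0}\ \text{for some}\ i\in I\}$ from the construction of a consistent matrix; write $\bar{I}$ and $\bar{J}$ for the complementary sets, let $B$ be the submatrix of $A$ with rows in $\bar{I}$ and columns in $\bar{J}$, and put $\bm{e}=(d_{i})_{i\in\bar{I}}$, which is a regular vector. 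The key structural fact, which is just a restatement of consistency, is that every column $\bm{a}_{j}$ with $j\in J$ is supported inside $I$ while every column $\bm{a}_{j}$ with $j\in\bar{J}$ is supported inside $\bar{I}$; so $A$, up to permuting rows and columns, is block diagonal with its blocks sitting over $I\times J$ and $\bar{I}\times\bar{J}$, and $\bm{d}$ vanishes on $I$. I would set up this splitting first, since everything afterwards is bookkeeping relative to it.

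Next, exactly as in the proof of Proposition~1, $\rho(A\bm{x},\bm{d})<\infty$ forces $\{A\bm{x}\}_{i}=\mathbb{0}$ for $i\in I$, hence $x_{j}=\mathbb{0}$ for all $j\in J$; conversely, once $x_{j}=\mathbb{0}$ for every $j\in J$, the vector $A\bm{x}$ vanishes on $I$ and coincides on $\bar{I}$ with $B\bm{x}_{\bar{J}}$, where $\bm{x}_{\bar{J}}=(x_{j})_{j\in\bar{J}}$, so that $\rho(A\bm{x},\bm{d})=\rho(B\bm{x}_{\bar{J}},\bm{e})$, the two sides being infinite together. Consequently $\rho(\mathcal{A},\bm{d})=\min_{\bm{x}\in\mathbb{X}^{n}}\rho(A\bm{x},\bm{d})$ equals the minimum of $\rho(B\bm{y},\bm{e})$ over $\bm{y}$ (the vectors with $x_{j}\ne\mathbb{0}$ for some $j\in J$ contributing only $\infty$), i.e. $\rho(\mathcal{B},\bm{e})$ for the span $\mathcal{B}$ of the columns of $B$; by Lemma~\ref{L-Lb} applied to the regular vector $\bm{e}$ this is $\Delta_{B}(\bm{e})$, attained at $\bm{y}=\Delta_{B}(\bm{e})(\bm{e}^{-}B)^{-}$, so the minimum for $A$ and $\bm{d}$ is attained at the vector with $\bm{x}_{J}=\mathbb{0}$ and $\bm{x}_{\bar{J}}=\Delta_{B}(\bm{e})(\bm{e}^{-}B)^{-}$. (If $B$ has a zero row then $\rho(\mathcal{B},\bm{e})=\infty$ and $A$ has a zero row as well, so $\Delta_{A}(\bm{d})=\Delta=\infty$ and both sides agree trivially.)

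What remains, and what I expect to be the delicate part, is the identification $\Delta_{B}(\bm{e})=\Delta_{A}(\bm{d})=\Delta$ together with the check that the minimizer above is the vector $\Delta(\bm{d}^{-}A)^{-}$ named in the theorem; this is a computation in which one carefully propagates the coordinate splitting through $\sqrt{(A(\bm{d}^{-}A)^{-})^{-}\bm{d}}$. Since $\bm{d}^{-}$ has zeros precisely on $I$ and the $J$-columns of $A$ live on $I$, the row vector $\bm{d}^{-}A$ vanishes on $J$ and equals $\bm{e}^{-}B$ on $\bar{J}$; hence $(\bm{d}^{-}A)^{-}$ vanishes on $J$ and equals $(\bm{e}^{-}B)^{-}$ on $\bar{J}$; since the $\bar{J}$-columns of $A$ live on $\bar{I}$, the column vector $A(\bm{d}^{-}A)^{-}$ vanishes on $I$ and equals $B(\bm{e}^{-}B)^{-}$ on $\bar{I}$; inverting once more and using that $\bm{d}$ vanishes on $I$ gives $(A(\bm{d}^{-}A)^{-})^{-}\bm{d}=(B(\bm{e}^{-}B)^{-})^{-}\bm{e}$, so $\Delta_{A}(\bm{d})^{2}=\Delta_{B}(\bm{e})^{2}$ and $\Delta_{A}(\bm{d})=\Delta_{B}(\bm{e})$ by radicability. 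The same chain of identities shows that $\Delta(\bm{d}^{-}A)^{-}$ is zero on $J$ and equals $\Delta_{B}(\bm{e})(\bm{e}^{-}B)^{-}$ on $\bar{J}$, i.e. it is precisely the minimizer found above, which finishes the proof.
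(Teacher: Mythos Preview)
Your proposal is correct and follows essentially the same route as the paper: reduce to the regular case by deleting the rows indexed by $I$ and the columns indexed by $J$, apply Lemma~\ref{L-Lb} to the resulting pair $(B,\bm{e})$ (the paper calls these $A^{\prime},\bm{d}^{\prime}$), and then check that the residual and the minimizer transfer back unchanged. Your write-up is in fact more explicit than the paper's on two points: the block-diagonal structure of a consistent matrix, and the coordinate-by-coordinate verification that $(A(\bm{d}^{-}A)^{-})^{-}\bm{d}=(B(\bm{e}^{-}B)^{-})^{-}\bm{e}$, where the paper simply asserts the equality.
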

\begin{proof}
Note that for the case when $\bm{d}$ is regular, the proof is given in Lemma~\ref{L-Lb}. Now we suppose that the vector $\bm{d}\ne\mathbb{0}$ has zero components. We define sets of indices $I=\{i|b_{i}=\mathbb{0}\}$ and $J=\{j|a_{ij}>\mathbb{0}, i\in I\}$.

To provide minimum of $\rho(A\bm{x},\bm{d})$, we have to set $x_{j}=\mathbb{0}$ for all $j\in J$. This makes it possible to exclude from consideration all  components of $\bm{d}$ and the rows of $A$ with indices in $I$, as well as all columns of $A$ with indices in $J$. By eliminating these elements, we obtain a new matrix $A^{\prime}$ and a new vector $\bm{d}^{\prime}$.

Denote the linear span of the columns in $A^{\prime}$ by $\mathcal{A}^{\prime}$. Considering that the vector $\bm{d}^{\prime}$ has no zero components, we apply Lemma~\ref{L-Lb} to get
$$
\rho(\mathcal{A},\bm{d})
=
\rho(\mathcal{A}^{\prime},\bm{d}^{\prime})
=
\Delta_{A^{\prime}}(\bm{d}^{\prime})
=
\Delta^{\prime}.
$$

Furthermore, we note that the minimum $\rho(A^{\prime}\bm{x}^{\prime},\bm{d}^{\prime})$ is attained if  $\bm{x}^{\prime}=\Delta^{\prime}(\bm{d}^{\prime-}A^{\prime})^{-}$, where $\bm{x}^{\prime}$ is a vector of order less than $n$.

The matrix $A$ differs from $A^{\prime}$ only in that it has extra zero rows and columns. Clearly, both matrices appear to be regular or irregular simultaneously.

Suppose that both matrices are regular. Taking into account that the vector $\bm{d}^{\prime}$ is obtained from $\bm{d}$ by removing zero components, we have
$$
\Delta^{\prime}
=
\sqrt{(A^{\prime}(\bm{d}^{\prime-}A^{\prime})^{-})^{-}\bm{d}^{\prime}}
=
\sqrt{(A(\bm{d}^{-}A)^{-})^{-}\bm{d}}
=
\Delta.
$$

Since the optimal vector $\bm{x}$ differs from $\bm{x}^{\prime}$ only in extra zero components, we conclude that $\rho(A\bm{x},\bm{d})$ achieves minimum at $\bm{x}=\Delta(\bm{d}^{-}A)^{-}$.
\end{proof}

In the next sections, we consider applications of the above result to analysis of linear dependence and to solution of linear equations.

\section{Linear Dependence}

First we give conditions for a vector $\bm{d}\in\mathbb{X}^{m}$ to be linearly dependent on vectors $\bm{a}_{1},\ldots,\bm{a}_{n}\in\mathbb{X}^{m}$, or equivalently, to admit a representation in the form of a linear combination $\bm{d}=x_{1}\bm{a}_{1}\oplus\cdots\oplus x_{n}\bm{a}_{n}$.

We define the matrix $A=(\bm{a}_{1},\ldots,\bm{a}_{n})$ and then calculate the residual $\Delta=\sqrt{(A(\bm{d}^{-}A)^{-})^{-}\bm{d}}$.

It follows from Lemma~\ref{L-Lb} that $\Delta\geq\mathbb{1}$. The equality $\Delta=\mathbb{1}$ means that the vector $\bm{d}$ belongs to the linear span $\mathcal{A}=\mathop\mathrm{span}\{\bm{a}_{1},\ldots,\bm{a}_{n}\}$, whereas the inequality $\Delta>\mathbb{1}$ implies that $\bm{d}$ is outside $\mathcal{A}$. In other words, we have the following statement.
\begin{lemma}\label{L-LH}
A vector $\bm{d}$ is linearly dependent on vectors $\bm{a}_{1},\ldots,\bm{a}_{n}$ if and only if $\Delta=\mathbb{1}$.
\end{lemma}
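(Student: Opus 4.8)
The plan is to derive Lemma~\ref{L-LH} as an essentially immediate consequence of Theorem~\ref{T-Lb}, by unwinding the definition of the distance function $\rho$. The key observation is that linear dependence of $\bm{d}$ on $\bm{a}_{1},\ldots,\bm{a}_{n}$ is by definition the existence of $\bm{x}\in\mathbb{X}^{n}$ with $A\bm{x}=\bm{d}$, which is precisely the statement that $\bm{d}\in\mathcal{A}$, i.e.\ that $\rho(\mathcal{A},\bm{d})=\min_{\bm{a}\in\mathcal{A}}\rho(\bm{a},\bm{d})$ is attained by the value $\rho(\bm{d},\bm{d})$. So the whole matter reduces to showing that $\rho(\mathcal{A},\bm{d})=\mathbb{1}$ if and only if $\bm{d}\in\mathcal{A}$.

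First I would handle the degenerate cases so that Theorem~\ref{T-Lb} applies: if $\bm{d}=\mathbb{0}$ then $\bm{d}\in\mathcal{A}$ trivially (take all $x_i=\mathbb{0}$) and, as noted in the excerpt, $\rho(\mathcal{A},\bm{d})=\mathbb{1}$, so the equivalence holds; similarly if some $\bm{a}_i=\mathbb{0}$ these can be dropped without changing either side. Also, by Proposition~1 (the statement that $\rho(A\bm{x},\bm{d})=\rho(\widehat{A}\bm{x},\bm{d})$) we may replace $A$ by the consistent matrix $\widehat{A}$ without affecting $\Delta$ or the span's relevant distances, so that Theorem~\ref{T-Lb} is applicable and yields $\rho(\mathcal{A},\bm{d})=\Delta$.

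Next I would argue the two directions. For the forward direction, if $\bm{d}$ is linearly dependent on $\bm{a}_1,\ldots,\bm{a}_n$, then $\bm{d}\in\mathcal{A}$, hence $\rho(\mathcal{A},\bm{d})\leq\rho(\bm{d},\bm{d})$; since $\bm{d}\ne\mathbb{0}$ has $\operatorname{supp}(\bm{d})=\operatorname{supp}(\bm{d})$, formula \eqref{E-rhoab} gives $\rho(\bm{d},\bm{d})=\bigoplus_{i\in\operatorname{supp}(\bm{d})}(d_i^{-1}d_i\oplus d_i^{-1}d_i)=\mathbb{1}$. Combined with $\rho(\mathcal{A},\bm{d})=\Delta\geq\mathbb{1}$ (which follows because each summand $b_i^{-1}a_i\oplus a_i^{-1}b_i\geq\mathbb{1}$ in the metric, or directly from $\bm{x}\bm{x}^-\geq I$ as used in Lemma~\ref{L-Lb}), this forces $\Delta=\mathbb{1}$. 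Conversely, if $\Delta=\mathbb{1}$, Theorem~\ref{T-Lb} gives a vector $\bm{x}=\Delta(\bm{d}^-A)^- = (\bm{d}^-A)^-$ at which $\rho(A\bm{x},\bm{d})=\mathbb{1}$; since $\rho(\bm{a},\bm{d})=\mathbb{1}$ with finite value forces $\operatorname{supp}(\bm{a})=\operatorname{supp}(\bm{d})$ and $d_i^{-1}a_i\oplus a_i^{-1}d_i=\mathbb{1}$, hence $a_i=d_i$, for every $i$ in the common support, we get $A\bm{x}=\bm{d}$, exhibiting $\bm{d}$ as a linear combination of the $\bm{a}_i$.

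The only genuinely non-routine point is the last implication: one must confirm that $\rho(\bm{a},\bm{d})=\mathbb{1}$ really does force $\bm{a}=\bm{d}$ and not merely closeness. This uses that $\mathbb{X}$ is a linearly ordered idempotent semifield, so $x\oplus x^{-1}=\mathbb{1}$ with $x\in\mathbb{X}_+$ is possible only when $x=\mathbb{1}$ (if $x>\mathbb{1}$ then $x\oplus x^{-1}=x>\mathbb{1}$, and symmetrically if $x<\mathbb{1}$); applying this with $x=d_i^{-1}a_i$ for each $i$ in the support, and noting equality of supports rules out the case of differing zero patterns, pins down $\bm{a}=\bm{d}$ exactly. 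I would state this small fact explicitly and then the lemma follows.
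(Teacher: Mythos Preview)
Your argument is correct and follows the same route as the paper: the paper states the lemma as an immediate consequence of the identity $\rho(\mathcal{A},\bm{d})=\Delta$ from Lemma~\ref{L-Lb} (and implicitly Theorem~\ref{T-Lb}), together with the observation $\Delta\geq\mathbb{1}$, without spelling out the step that $\rho(\bm{a},\bm{d})=\mathbb{1}$ forces $\bm{a}=\bm{d}$. You have simply made explicit the details the paper leaves to the reader, including the handling of degenerate cases and the small semifield fact that $x\oplus x^{-1}=\mathbb{1}$ implies $x=\mathbb{1}$.
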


Now we formulate a criterion that a system $\bm{a}_{1},\ldots,\bm{a}_{n}$ is linearly independent. We denote by $A_{i}=(\bm{a}_{1},\ldots,\bm{a}_{i-1},\bm{a}_{i+1},\ldots,\bm{a}_{n})$ a matrix obtained from $A$ by removing column $i$, and introduce
$$
\delta(A)=\min_{1\leq i\leq n}\Delta_{A_{i}}(\bm{a}_{i}).
$$

\begin{lemma}\label{L-LIC}
The system of vectors $\bm{a}_{1},\ldots,\bm{a}_{n}$ is linearly independent if and only if $\delta(A)>\mathbb{1}$.
\end{lemma}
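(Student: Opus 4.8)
The plan is to reduce Lemma~\ref{L-LIC} to Lemma~\ref{L-LH} by unwinding the definitions. Recall that a system $\bm{a}_{1},\ldots,\bm{a}_{n}$ is linearly \emph{dependent} exactly when at least one vector $\bm{a}_{i}$ is linearly dependent on the remaining vectors $\bm{a}_{1},\ldots,\bm{a}_{i-1},\bm{a}_{i+1},\ldots,\bm{a}_{n}$, that is, when $\bm{a}_{i}$ lies in $\mathop\mathrm{span}$ of the columns of $A_{i}$ for some $i$. By Lemma~\ref{L-LH} applied to the vector $\bm{a}_{i}$ and the matrix $A_{i}$, this happens if and only if $\Delta_{A_{i}}(\bm{a}_{i})=\mathbb{1}$. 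So the system is dependent iff $\Delta_{A_{i}}(\bm{a}_{i})=\mathbb{1}$ for some $i$, and hence independent iff $\Delta_{A_{i}}(\bm{a}_{i})\ne\mathbb{1}$ for every $i$.

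The remaining point is to convert the statement ``$\Delta_{A_{i}}(\bm{a}_{i})\ne\mathbb{1}$ for all $i$'' into ``$\delta(A)>\mathbb{1}$''. Here I would invoke the fact, noted in the discussion before Lemma~\ref{L-LH} and following from Lemma~\ref{L-Lb} (or Theorem~\ref{T-Lb}), that $\Delta_{A_{i}}(\bm{a}_{i})\geq\mathbb{1}$ always holds — this is because $\Delta_{A_{i}}(\bm{a}_{i})=\rho(\mathcal{A}_{i},\bm{a}_{i})$ when the relevant matrix is regular (and $\Delta=\infty\geq\mathbb{1}$ otherwise), and the distance function satisfies $\rho(\bm{a},\bm{b})\geq\mathbb{1}$ by \eqref{E-rhoab}, since each summand $b_{i}^{-1}a_{i}\oplus a_{i}^{-1}b_{i}$ is at least $\mathbb{1}$ (one checks $y\oplus y^{-1}\geq\mathbb{1}$ for any $y\in\mathbb{X}_{+}$ using that the order is consistent with a radicable idempotent semifield). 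Granting $\Delta_{A_{i}}(\bm{a}_{i})\geq\mathbb{1}$ for each $i$, the condition that none of these values equals $\mathbb{1}$ is equivalent to each being strictly greater than $\mathbb{1}$, which is equivalent to their minimum $\delta(A)=\min_{1\leq i\leq n}\Delta_{A_{i}}(\bm{a}_{i})$ being strictly greater than $\mathbb{1}$, since the minimum of finitely many elements each $>\mathbb{1}$ is $>\mathbb{1}$, and conversely if the minimum is $>\mathbb{1}$ then certainly each term is.

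Putting the two halves together gives the biconditional: $\bm{a}_{1},\ldots,\bm{a}_{n}$ is linearly independent $\iff$ $\Delta_{A_{i}}(\bm{a}_{i})>\mathbb{1}$ for all $i$ $\iff$ $\delta(A)>\mathbb{1}$. I do not anticipate a serious obstacle here; the argument is essentially bookkeeping on top of Lemma~\ref{L-LH}. The one place deserving a line of care is the handling of degenerate cases — when some $A_{i}$ is irregular or when $\bm{a}_{i}$ and $A_{i}$ are not consistent — but in those cases $\Delta_{A_{i}}(\bm{a}_{i})=\infty>\mathbb{1}$ by definition, consistent with $\bm{a}_{i}\notin\mathcal{A}_{i}$, so the equivalence still goes through. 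One should also note implicitly that all the $\bm{a}_{i}$ are assumed nonzero in the definition of a (minimal) generating system and of linear independence of a system of vectors, so Lemma~\ref{L-LH} applies as stated.
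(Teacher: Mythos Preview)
Your proposal is correct and follows essentially the same approach as the paper: unwind the definition of $\delta(A)$ and apply the linear-dependence criterion for each $\bm{a}_{i}$ relative to $A_{i}$. The paper's own proof invokes Theorem~\ref{T-Lb} rather than Lemma~\ref{L-LH} and only spells out the direction $\delta(A)>\mathbb{1}\Rightarrow$ independence, so your treatment of the converse and of the degenerate cases is in fact more complete.
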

\begin{proof}
Clearly, the condition $\delta(A)>\mathbb{1}$ involves that $\Delta_{A_{i}}(\bm{a}_{i})>\mathbb{1}$ for all $i=1,\ldots,n$. It follows from Theorem~\ref{T-Lb} that here none of the vectors $\bm{a}_{1},\ldots,\bm{a}_{n}$ is a linear combination of others, and therefore, the system of the vectors is linearly independent.
\end{proof}

Let $\bm{a}_{1},\ldots,\bm{a}_{n}$ and $\bm{b}_{1},\ldots,\bm{b}_{k}$ be two systems of nonzero vectors. These systems are considered to be equivalent if each vector of one system is a linear combination of vectors of the other system.

Consider a system $\bm{a}_{1},\ldots,\bm{a}_{n}$ that can include linearly dependent vectors. To construct an equivalent linearly independent system, we implement a sequential procedure that examines the vectors one by one to decide whether to remove a vector or not.

At each step $i=1,\ldots,n$, the vector $\bm{a}_{i}$ is removed if $\Delta_{\widetilde{A}_{i}}(\bm{a}_{i})=\mathbb{1}$, where the matrix $\widetilde{A}_{i}$ is composed of those columns in $A_{i}$, that are retained after the previous steps. Upon completion of the procedure, we get a new system $\widetilde{\bm{a}}_{1},\ldots,\widetilde{\bm{a}}_{k}$, where $k\leq n$.

\begin{proposition}
The system $\widetilde{\bm{a}}_{1},\ldots,\widetilde{\bm{a}}_{k}$ is a linearly independent system that is equivalent to $\bm{a}_{1},\ldots,\bm{a}_{n}$.
\end{proposition}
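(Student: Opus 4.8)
The plan is to prove the two assertions separately---that $\widetilde{\bm{a}}_{1},\ldots,\widetilde{\bm{a}}_{k}$ is equivalent to $\bm{a}_{1},\ldots,\bm{a}_{n}$, and that it is linearly independent---in both cases using Lemma~\ref{L-LH}, which says that a vector is linearly dependent on a system exactly when the corresponding residual equals $\mathbb{1}$.

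For the equivalence I would argue by induction on the step index $i$, maintaining the invariant that the linear span of the vectors \emph{present} after step $i$---those of $\bm{a}_{1},\ldots,\bm{a}_{i}$ still retained, together with the not-yet-examined $\bm{a}_{i+1},\ldots,\bm{a}_{n}$---equals $\mathcal{A}=\mathop\mathrm{span}\{\bm{a}_{1},\ldots,\bm{a}_{n}\}$. Before step $1$ this is clear. At step $i$, if $\bm{a}_{i}$ is kept the present set is unchanged; if $\bm{a}_{i}$ is discarded then $\Delta_{\widetilde{A}_{i}}(\bm{a}_{i})=\mathbb{1}$, so by Lemma~\ref{L-LH} the vector $\bm{a}_{i}$ is a linear combination of the columns of $\widetilde{A}_{i}$, and these columns make up exactly the set of vectors present just before step $i$ apart from $\bm{a}_{i}$ itself; hence deleting $\bm{a}_{i}$ does not shrink the span and the invariant survives. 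After step $n$ the invariant reads $\mathop\mathrm{span}\{\widetilde{\bm{a}}_{1},\ldots,\widetilde{\bm{a}}_{k}\}=\mathcal{A}$. Now each $\widetilde{\bm{a}}_{j}$ is literally one of the $\bm{a}_{i}$, hence trivially a linear combination of $\bm{a}_{1},\ldots,\bm{a}_{n}$; conversely every $\bm{a}_{i}\in\mathcal{A}=\mathop\mathrm{span}\{\widetilde{\bm{a}}_{1},\ldots,\widetilde{\bm{a}}_{k}\}$ is a linear combination of the retained vectors, which is precisely the definition of equivalence.

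For independence, index the retained vectors as $\widetilde{\bm{a}}_{j}=\bm{a}_{i_{j}}$ with $i_{1}<\cdots<i_{k}$, fix $j$, and show that $\widetilde{\bm{a}}_{j}$ is not linearly dependent on $\{\widetilde{\bm{a}}_{l}:l\ne j\}$. The key observation is that at step $i_{j}$, the moment $\bm{a}_{i_{j}}$ was examined and \emph{kept}, the matrix $\widetilde{A}_{i_{j}}$ already had among its columns every other eventually-retained vector: the $\widetilde{\bm{a}}_{l}$ with $l<j$ have index $i_{l}<i_{j}$ and were retained at their earlier steps, so are still present; the $\widetilde{\bm{a}}_{l}$ with $l>j$ have index $i_{l}>i_{j}$, so they have not been examined yet and, being retained at the end, are never discarded afterwards, hence present too. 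Since $\bm{a}_{i_{j}}$ was kept, $\Delta_{\widetilde{A}_{i_{j}}}(\bm{a}_{i_{j}})\ne\mathbb{1}$, so by Lemma~\ref{L-LH} $\widetilde{\bm{a}}_{j}$ is not a linear combination of the columns of $\widetilde{A}_{i_{j}}$; as $\{\widetilde{\bm{a}}_{l}:l\ne j\}$ is a subcollection of those columns and the span is monotone under enlarging the generating set, a fortiori $\widetilde{\bm{a}}_{j}$ is not a linear combination of $\{\widetilde{\bm{a}}_{l}:l\ne j\}$. Since $j$ was arbitrary, no retained vector depends on the others, so by definition $\widetilde{\bm{a}}_{1},\ldots,\widetilde{\bm{a}}_{k}$ is linearly independent; equivalently, this yields $\delta(\widetilde{A})>\mathbb{1}$ and the conclusion follows from Lemma~\ref{L-LIC}.

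The routine parts---the base case of the induction and the span manipulations---are immediate. The step I expect to be the main obstacle is the combinatorial bookkeeping in the independence argument: one has to pin down precisely which columns sit in $\widetilde{A}_{i_{j}}$ at the instant step $i_{j}$ runs, using simultaneously that the earlier retained vectors have already passed their own tests and that the later retained vectors, not yet examined, are guaranteed never to be dropped. Once this is settled, passing from ``$\widetilde{\bm{a}}_{j}\notin\mathop\mathrm{span}(\text{columns of }\widetilde{A}_{i_{j}})$'' to ``$\widetilde{\bm{a}}_{j}\notin\mathop\mathrm{span}\{\widetilde{\bm{a}}_{l}:l\ne j\}$'' is merely monotonicity of the span, and the equivalence part poses no real difficulty.
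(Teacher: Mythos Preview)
Your proof is correct and follows the same two-part structure as the paper's argument: first equivalence, then independence, invoking Lemma~\ref{L-LH} and Lemma~\ref{L-LIC}. In fact you supply considerably more detail than the paper does---the paper merely asserts that every $\bm{a}_{j}$ lies in $\mathop\mathrm{span}\{\widetilde{\bm{a}}_{1},\ldots,\widetilde{\bm{a}}_{k}\}$ and then appeals to Lemma~\ref{L-LIC} for independence without verifying $\delta(\widetilde{A})>\mathbb{1}$, whereas your span-invariant induction and your bookkeeping showing that $\{\widetilde{\bm{a}}_{l}:l\ne j\}$ sits inside the columns of $\widetilde{A}_{i_{j}}$ fill exactly those gaps.
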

\begin{proof}
According to the way of constructing the system $\widetilde{\bm{a}}_{1},\ldots,\widetilde{\bm{a}}_{k}$, each vector $\widetilde{\bm{a}}_{i}$ coincides with a vector of the system $\bm{a}_{1},\ldots,\bm{a}_{n}$. Since at the same time, for each $\bm{a}_{j}$, it holds that $\bm{a}_{j}\in\mathop\mathrm{span}\{\widetilde{\bm{a}}_{1},\ldots,\widetilde{\bm{a}}_{k}\}$, both systems are equivalent. Finally, due to Lemma~\ref{L-LIC}, the system $\widetilde{\bm{a}}_{1},\ldots,\widetilde{\bm{a}}_{k}$ is linearly independent.
\end{proof}

\section{Linear Equations}

Given a matrix $A\in\mathbb{X}^{m\times n}$ and a vector $\bm{d}\in\mathbb{X}^{m}$, consider the problem of finding an unknown vector $\bm{x}\in\mathbb{X}^{n}$ to satisfy the equation
\begin{equation}
A\bm{x}=\bm{d}.
\label{E-Axd}
\end{equation}

In what follows, we assume that the matrix $A$ is already put into a form where it is consistent with $\bm{d}$, and use the notation $\Delta=\Delta_{A}(\bm{d})$.

If a matrix $A=(\bm{a}_{1},\ldots,\bm{a}_{n})$ has a zero column, say $\bm{a}_{i}$, then the solution of equation \eqref{E-Axd} reduces to that of an equation that is obtained from \eqref{E-Axd} by removing the component $x_{i}$ in the vector $\bm{x}$ together with eliminating the column $\bm{a}_{i}$ in $A$. Each solution of the reduced equation causes equation \eqref{E-Axd} to have a set of solutions, where $x_{i}$ takes all values in $\mathbb{X}$. 

Suppose that $A=\mathbb{0}$. In this case, any vector $\bm{x}\in\mathbb{X}^{n}$ is a solution provided that $\bm{b}=\mathbb{0}$, and there is no solution otherwise. If $\bm{b}=\mathbb{0}$, then equation \eqref{E-Axd} has a trivial solution $\bm{x}=\mathbb{0}$, which is unique when the matrix $A$ has no zero columns.

From here on we assume that the vector $\bm{b}$ and all columns in the matrix $A$ are nonzero.


In the following, we examine conditions for the solution to exist and to be unique, and then describe the general solution to the equation. 

\subsection{Existence and Uniqueness of Solution}

Application of previous results brings us to a position to arrive at the next assertion. 
\begin{theorem}\label{T-EAxd}
If a matrix $A$ is consistent with a vector $\bm{d}\ne\mathbb{0}$, then the following statements are valid:
\begin{enumerate}[\rm(1)]
\item Equation \eqref{E-Axd} has solutions if and only if $\Delta=\mathbb{1}$.
\item If solvable, the equation has a solution
$$
\bm{x}
=
(\bm{d}^{-}A)^{-}.
$$
\item If all columns in $A$ form a minimal system that generates $\bm{b}$, then the above solution is unique.
\end{enumerate}
\end{theorem}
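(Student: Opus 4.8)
\textbf{Proof proposal for Theorem~\ref{T-EAxd}.}

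The plan is to read off all three statements from Theorem~\ref{T-Lb}, which already identifies $\Delta$ with the distance $\rho(\mathcal{A},\bm{d})$ and exhibits the minimizer $\bm{x}=\Delta(\bm{d}^{-}A)^{-}$. First I would observe that equation~\eqref{E-Axd} has a solution precisely when $\bm{d}\in\mathcal{A}$, i.e.\ when $\rho(\mathcal{A},\bm{d})=\mathbb{1}$, since $\rho(\bm{a},\bm{d})=\mathbb{1}$ holds exactly when $\bm{a}=\bm{d}$ (the distance function vanishes only on the diagonal, as $\mathbb{1}$ is the least element in the range of $\rho$ on vectors with equal support). By Theorem~\ref{T-Lb} this is the same as $\Delta=\mathbb{1}$, giving statement~(1). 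It is worth noting here that by Lemma~\ref{L-Lb} we always have $\Delta\geq\mathbb{1}$, so the condition is genuinely an equality constraint.

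For statement~(2), I would simply specialize the minimizer from Theorem~\ref{T-Lb}: when $\Delta=\mathbb{1}$, the vector $\bm{x}=\Delta(\bm{d}^{-}A)^{-}=(\bm{d}^{-}A)^{-}$ attains $\rho(A\bm{x},\bm{d})=\Delta=\mathbb{1}$, hence $A\bm{x}=\bm{d}$, so this $\bm{x}$ solves the equation. (One should check that $\bm{d}^{-}A$ is a regular row vector so that $(\bm{d}^{-}A)^{-}$ is defined: this follows because $A$ is consistent with $\bm{d}$, $A$ is regular whenever $\Delta<\infty$, and $\bm{d}\ne\mathbb{0}$, so each column of $A$ contributes a nonzero entry aligned with a nonzero component of $\bm{d}$.)

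For statement~(3), the hypothesis is that the columns $\bm{a}_{1},\dots,\bm{a}_{n}$ form a minimal generating system for $\bm{d}$. The paper has already shown, in the subsection on linear dependence, that in this situation the representation $\bm{d}=x_{1}\bm{a}_{1}\oplus\cdots\oplus x_{n}\bm{a}_{n}$ is unique. Since any solution $\bm{x}$ of~\eqref{E-Axd} is exactly such a representation, uniqueness of the representation is uniqueness of the solution, and we are done. I would phrase this as a direct appeal to that earlier uniqueness argument rather than reproving it.

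The main obstacle is not any deep step but rather making the support/regularity bookkeeping airtight: I must be careful that "$\Delta=\mathbb{1}$ iff solvable" really uses consistency of $A$ with $\bm{d}$ (so that the zero pattern of $A\bm{x}$ can match that of $\bm{d}$ for suitable $\bm{x}$) and that the formula $(\bm{d}^{-}A)^{-}$ makes sense — i.e.\ that $\bm{d}^{-}A$ has no zero entries — before inverting. Both points are already implicit in the hypotheses and in Theorem~\ref{T-Lb}; the work is just to state them cleanly so the three assertions follow with essentially no computation.
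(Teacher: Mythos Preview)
Your proposal is correct and follows exactly the route the paper takes: statements (1) and (2) are read off from Theorem~\ref{T-Lb} (existence $\Leftrightarrow\Delta=\mathbb{1}$, and the minimizer specializes to $(\bm{d}^{-}A)^{-}$), while (3) is just the earlier observation that a minimal generating system gives a unique linear combination. The paper's own proof is the same two-line appeal to Theorem~\ref{T-Lb} and to that uniqueness fact; your version merely spells out the support/regularity bookkeeping that the paper leaves implicit.
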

\begin{proof}
The existence condition and the form of a solution result from Theorem~\ref{T-Lb}. The uniqueness condition follows from representation of a vector as a unique linear combination of its minimal set of generators.
\end{proof}

We define a pseudo-solution to equation \eqref{E-Axd} as a vector that satisfies the equation $A\bm{x}=\Delta A(\bm{d}^{-}A)^{-}$.

Note that the last equation always has a solution that is given by $\bm{x}=\Delta(\bm{d}^{-}A)^{-}$. Clearly, when $\Delta=\mathbb{1}$ this pseudo-solution becomes an actual solution. Moreover, it follows from Theorem~\ref{T-Lb} that the pseudo-solution provides the minimum distance to the vector $\bm{d}$ over all vectors in the linear span of columns of the matrix $A$ in the sense of the metric $\rho$.

\subsection{General Solution}

To describe a general solution to equation \eqref{E-Axd}, we first prove an auxiliary result.

\begin{lemma}\label{L-AxdGS}
Suppose that $I$ is a subset of column indices of a matrix $A$ and $\bm{d}\in\mathop\mathrm{span}\{\bm{a}_{i}|i\in I\}$.

Then any vector $\bm{x}_{I}=(x_{i})$ with components $x_{i}=(\bm{d}^{-}\bm{a}_{i})^{-}$ if $i\in I$, and $x_{i}\leq(\bm{d}^{-}\bm{a}_{i})^{-}$ otherwise, is a solution to equation \eqref{E-Axd}.
\end{lemma}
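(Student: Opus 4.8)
The plan is to show that any such $\bm{x}_{I}$ satisfies $A\bm{x}_{I}=\bm{d}$ by bounding $A\bm{x}_{I}$ from above by $\bm{d}$ and from below by $\bm{d}$ separately. Since $\bm{d}\in\mathop\mathrm{span}\{\bm{a}_{i}\mid i\in I\}$, the submatrix $A_{I}$ formed by the columns with indices in $I$ is consistent with $\bm{d}$ (the span contains $\bm{d}$, which forces the relevant support conditions), and Theorem~\ref{T-Lb} applied to $A_{I}$ and $\bm{d}$ gives $\Delta_{A_{I}}(\bm{d})=\mathbb{1}$, with the minimizing vector $\bm{x}^{\prime}=(\bm{d}^{-}A_{I})^{-}$, whose components are exactly $x_{i}=(\bm{d}^{-}\bm{a}_{i})^{-}$ for $i\in I$. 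Thus $A_{I}\bm{x}^{\prime}=\bm{d}$ by part~(2) of Theorem~\ref{T-EAxd} (or directly from Theorem~\ref{T-Lb} with $\Delta=\mathbb{1}$).

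First I would establish the upper bound $A\bm{x}_{I}\leq\bm{d}$. For the columns $i\in I$ this is immediate since those contribute $A_{I}\bm{x}^{\prime}=\bm{d}$. For a column $j\notin I$, the term $x_{j}\bm{a}_{j}$ satisfies $x_{j}\bm{a}_{j}\leq(\bm{d}^{-}\bm{a}_{j})^{-}\bm{a}_{j}$ by isotonicity of scalar multiplication; and the key elementary inequality $(\bm{d}^{-}\bm{a}_{j})^{-}\bm{a}_{j}\leq\bm{d}$ holds because $\bm{d}^{-}\bm{a}_{j}=\bigoplus_{i}d_{i}^{-1}a_{ij}\geq d_{k}^{-1}a_{kj}$ for each $k$, hence $(\bm{d}^{-}\bm{a}_{j})^{-}a_{kj}\leq d_{k}$ for every $k$ (with the convention handling zero components via the consistency of $A$ with $\bm{d}$). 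Taking the componentwise $\oplus$ over all $j\notin I$ together with the $i\in I$ part, we get $A\bm{x}_{I}=\bigoplus_{i\in I}x_{i}\bm{a}_{i}\oplus\bigoplus_{j\notin I}x_{j}\bm{a}_{j}\leq\bm{d}\oplus\bm{d}=\bm{d}$.

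Then the lower bound $A\bm{x}_{I}\geq\bm{d}$ is trivial, because $A\bm{x}_{I}$ already includes the contribution $\bigoplus_{i\in I}x_{i}\bm{a}_{i}=A_{I}\bm{x}^{\prime}=\bm{d}$, and adding further nonnegative (i.e. $\succeq\mathbb{0}$) terms with the idempotent addition only keeps the value at least $\bm{d}$. Combining the two bounds yields $A\bm{x}_{I}=\bm{d}$, so $\bm{x}_{I}$ solves \eqref{E-Axd}. The main obstacle I anticipate is making the inequality $(\bm{d}^{-}\bm{a}_{j})^{-}\bm{a}_{j}\leq\bm{d}$ fully rigorous in the presence of zero components of $\bm{d}$ — here one must invoke the consistency of $A$ with $\bm{d}$ to ensure that columns $\bm{a}_{j}$ do not have nonzero entries in rows where $\bm{d}$ vanishes (or, more precisely, to argue those entries are handled correctly by the definition of $\bm{x}^{-}$ and $\widehat{A}$), so that the bound survives the irregular case; everything else is a routine manipulation of the idempotent operations.
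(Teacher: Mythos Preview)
Your proposal is correct and follows essentially the same approach as the paper's proof: both arguments split $A\bm{x}_{I}=\bigoplus_{i\in I}x_{i}\bm{a}_{i}\oplus\bigoplus_{i\notin I}x_{i}\bm{a}_{i}$, invoke the earlier existence result (Theorem~\ref{T-EAxd} for the submatrix $A_{I}$) to conclude that the $I$-part already equals $\bm{d}$ with $x_{i}=(\bm{d}^{-}\bm{a}_{i})^{-}$, and then observe that the remaining terms satisfy $x_{i}\bm{a}_{i}\leq\bm{d}$ precisely when $x_{i}\leq(\bm{d}^{-}\bm{a}_{i})^{-}$. Your upper/lower bound framing and the explicit verification of $(\bm{d}^{-}\bm{a}_{j})^{-}\bm{a}_{j}\leq\bm{d}$ are just a slightly more detailed rendering of what the paper does tersely; your concern about irregular $\bm{d}$ is legitimate but is already covered by the paper's standing assumption (stated at the start of the section on linear equations) that $A$ has been replaced by its consistent form $\widehat{A}$, which is inherited by any subcollection of columns.
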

\begin{proof}
To verify the statement, we first consider that $\bm{d}\in\mathop\mathrm{span}\{\bm{a}_{i}|i\in I\}\subset\mathop\mathrm{span}\{\bm{a}_{1},\ldots,\bm{a}_{n}\}$ and thus equation \eqref{E-Axd} has a solution $\bm{x}_{I}$. For the components $x_{i}$ of the solution, we can write
$$
\bm{d}
=
\bigoplus_{i=1}^{n}x_{i}\bm{a}_{i}
=
\bigoplus_{i\in I}x_{i}\bm{a}_{i}
\oplus
\bigoplus_{i\not\in I}x_{i}\bm{a}_{i}.
$$

We note that the condition $\bm{d}\in\mathop\mathrm{span}\{\bm{a}_{i}|i\in I\}$ is equivalent to an equality
$$
\bm{d}
=
\bigoplus_{i\in I}x_{i}\bm{a}_{i},
$$
which is valid when $x_{i}=(\bm{d}^{-}\bm{a}_{i})^{-}$ for all $i\in I$.

The remaining components with indices $i\not\in I$ must be taken so as to satisfy inequalities
$$
\bm{d}
\geq
\bigoplus_{i\not\in I}x_{i}\bm{a}_{i}
\geq
x_{i}\bm{a}_{i}.
$$

It remains to solve the inequalities to conclude that for all $i\not\in I$, we can take any $x_{i}\leq(\bm{d}^{-}\bm{a}_{i})^{-}$.
\end{proof}


Now suppose that the columns with indices in $I$ form a minimal generating system for $\bm{d}$. Denote by $\mathcal{I}$ a set of all such index sets $I$. Clearly, $\mathcal{I}\neq\emptyset$ only when equation \eqref{E-Axd} has at least one solution.

By applying Lemma~\ref{L-AxdGS}, it is not difficult to verify that the following statement holds.

\begin{theorem}\label{T-GS}
The general solution to equation \eqref{E-Axd} is a (possible empty) family of solutions $\bm{x}=\{\bm{x}_{I}| I\in\mathcal{I}\}$, where each solution $\bm{x}_{I}=(x_{i})$ is given by
\begin{align*}
x_{i}
&=
(\bm{d}^{-}\bm{a}_{i})^{-},
\quad
\text{if $i\in I$},
\\
x_{i}
&\leq
(\bm{d}^{-}\bm{a}_{i})^{-},
\quad
\text{if $i\not\in I$}.
\end{align*}
\end{theorem}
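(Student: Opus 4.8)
The plan is to establish two inclusions: every vector in the described family solves equation \eqref{E-Axd}, and conversely every solution of \eqref{E-Axd} lies in the family. The first inclusion is essentially a packaging of Lemma~\ref{L-AxdGS}: for each $I\in\mathcal{I}$, the columns $\{\bm{a}_i\mid i\in I\}$ form a minimal generating system for $\bm{d}$, so in particular $\bm{d}\in\mathop\mathrm{span}\{\bm{a}_i\mid i\in I\}$, and Lemma~\ref{L-AxdGS} then says precisely that any $\bm{x}_I=(x_i)$ with $x_i=(\bm{d}^{-}\bm{a}_i)^{-}$ for $i\in I$ and $x_i\le(\bm{d}^{-}\bm{a}_i)^{-}$ for $i\notin I$ satisfies $A\bm{x}_I=\bm{d}$. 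So this direction is immediate once $\mathcal{I}$ is nonempty; and if equation \eqref{E-Axd} is unsolvable then by Theorem~\ref{T-EAxd}(1) we have $\Delta>\mathbb{1}$, no minimal generating subsystem for $\bm{d}$ exists among the columns, $\mathcal{I}=\emptyset$, and the claimed family is empty, consistent with the absence of solutions.

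For the converse, I would take an arbitrary solution $\bm{x}=(x_i)$ of \eqref{E-Axd}, so $\bm{d}=\bigoplus_{i=1}^n x_i\bm{a}_i$, and produce an index set $I\in\mathcal{I}$ for which $\bm{x}=\bm{x}_I$ in the sense of the theorem. First, from $\bm{d}\ge x_i\bm{a}_i$ for every $i$ (each term in an idempotent sum is dominated by the sum), right-multiplying by appropriate inverses gives $x_i\le(\bm{d}^{-}\bm{a}_i)^{-}$ for all $i=1,\dots,n$; this is exactly the computation already carried out in the proof of Lemma~\ref{L-AxdGS}, so the ``$\le$'' part of the theorem holds automatically for every solution. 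It remains to locate the set $I$ on which equality is forced. Here I would invoke the minimal-generating-system construction: starting from the subsystem $\{\bm{a}_i\mid x_i\ne\mathbb{0},\ x_i\text{ contributes to }\bm{d}\}$, discard vectors one at a time (as in the sequential procedure preceding the Proposition in Section~4) until a minimal generating system $\{\bm{a}_i\mid i\in I\}$ for $\bm{d}$ is obtained, so $I\in\mathcal{I}$. By the uniqueness of the representation of $\bm{d}$ over a minimal generating system (proved in the Linear Dependence subsection), the coefficients of $\bm{d}$ relative to $\{\bm{a}_i\mid i\in I\}$ are uniquely determined, and they must equal $(\bm{d}^{-}\bm{a}_i)^{-}$ since Lemma~\ref{L-AxdGS} exhibits that choice as a valid representation. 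Hence $x_i=(\bm{d}^{-}\bm{a}_i)^{-}$ for $i\in I$, while for $i\notin I$ we already have $x_i\le(\bm{d}^{-}\bm{a}_i)^{-}$, so $\bm{x}=\bm{x}_I$ for this $I\in\mathcal{I}$.

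The main obstacle I anticipate is the converse direction, specifically the argument that every solution's ``active'' part can be pruned down to a genuine minimal generating system $I\in\mathcal{I}$ on which the coefficients are exactly $(\bm{d}^{-}\bm{a}_i)^{-}$. One must be careful that (i) the pruning terminates in a system that still generates $\bm{d}$ and is minimal, so that $I$ really belongs to $\mathcal{I}$; (ii) the surviving coefficients in the original solution $\bm{x}$ coincide with the coefficients $(\bm{d}^{-}\bm{a}_i)^{-}$ delivered by Lemma~\ref{L-AxdGS}---this is where the uniqueness-of-representation lemma does the real work, but it applies only to the coefficients indexed by $I$, so one must separately confirm (via the minimality argument, exactly as in the uniqueness proof in Section~2, that a term strictly below what the sum already achieves may be dropped) that no coefficient $x_i$ with $i\in I$ can be strictly below $(\bm{d}^{-}\bm{a}_i)^{-}$; and (iii) the coefficients $x_i$ with $i\notin I$ are genuinely unconstrained except for the inequality, which is what makes the description a faithful parametrization rather than an overcount. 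Everything else---the ``$\le$'' bound for all indices, and the forward inclusion---is routine given the cited results.
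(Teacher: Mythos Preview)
The paper itself offers no proof beyond the sentence ``By applying Lemma~\ref{L-AxdGS}, it is not difficult to verify that the following statement holds,'' so your proposal is far more detailed than what appears there. Your forward inclusion is exactly the content of Lemma~\ref{L-AxdGS} and matches the paper's intent.

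For the converse you go well beyond the paper, and your outline is essentially correct, but one point deserves tightening. You invoke ``the sequential procedure preceding the Proposition in Section~4'' to prune down to $I$. That procedure produces a \emph{linearly independent} subsystem with the same span; it does not in general produce a \emph{minimal generating system for $\bm{d}$}, which is what $\mathcal{I}$ requires (a linearly independent system can still have $\bm{d}$ in the span of a proper subset). Moreover, that procedure ignores the particular coefficients $x_i$, so it could discard an index~$j$ on which the original solution had $x_j=(\bm{d}^{-}\bm{a}_j)^{-}$ while retaining an index $i$ on which $x_i<(\bm{d}^{-}\bm{a}_i)^{-}$, breaking your subsequent appeal to uniqueness.

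The clean fix is the one you already gesture at in your obstacles paragraph: prune \emph{terms of the sum} rather than vectors of the system. Starting from $\bm{d}=\bigoplus_i x_i\bm{a}_i$, iteratively drop any term $x_j\bm{a}_j$ whose removal leaves the sum equal to $\bm{d}$; let $I$ be the surviving index set. Then for each $j\in I$ there is a row $k$ with $x_j a_{kj}=d_k$, whence $x_j\geq (\bm{d}^{-}\bm{a}_j)^{-}$; combined with the universal bound $x_j\leq(\bm{d}^{-}\bm{a}_j)^{-}$ this gives equality on $I$ \emph{first}. Minimality of $I$ then follows immediately: if $\bm{d}\in\mathop{\mathrm{span}}\{\bm{a}_i\mid i\in I'\}$ for some proper $I'\subsetneq I$, then $\bm{d}=\bigoplus_{i\in I'}(\bm{d}^{-}\bm{a}_i)^{-}\bm{a}_i=\bigoplus_{i\in I'}x_i\bm{a}_i$, so any $j\in I\setminus I'$ could have been dropped, contradicting the pruning. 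Thus $I\in\mathcal{I}$ and $\bm{x}=\bm{x}_I$. With this reordering (coefficients first, minimality second), the argument is complete; the uniqueness lemma from Section~2 is then not needed.
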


Consider a case when the family reduces to one solution set. Let the columns in $A$ are linearly independent. Then there may exist only one subset of columns that form a minimal generating system for $\bm{d}$. If the subset coincides with the set of all columns, then the solution reduces to a unique vector $\bm{x}=(\bm{d}^{-}A)^{-}$.

Graphical illustration of unique and non-unique solutions to equation \eqref{E-Axd} are given in Fig.~\ref{F-AxdS}.
\begin{figure}[ht]
\setlength{\unitlength}{1mm}
\begin{center}

\begin{picture}(25,47)

\put(0,5){\vector(1,0){25}}
\put(5,2){\vector(0,1){45}}

\put(5,5){\thicklines\vector(1,4){3}}
\put(9,17){\line(-1,0){5}}

\put(2,11){\thicklines\line(1,1){21}}
\multiput(3,12)(1,1){20}{\line(1,0){1}}

\put(5,5){\thicklines\vector(3,1){12}}
\put(17,9){\line(0,-1){5}}

\put(10,2){\thicklines\line(1,1){16}}
\multiput(11,3)(1,1){16}{\line(-1,0){1}}

\put(5,5){\thicklines\vector(3,4){18}}
\put(23,29){\line(-1,0){19}}
\put(23,29){\line(0,-1){25}}

\put(13,12){$\bm{a}_{1}$}
\put(6,21){$\bm{a}_{2}$}
\put(24,30){$\bm{d}$}

\put(18,1){$x_{1}$}
\put(0,23){$x_{2}$}

\end{picture}
\hspace{20\unitlength}
\begin{picture}(25,47)

\put(0,5){\vector(1,0){25}}
\put(5,2){\vector(0,1){45}}

\put(5,5){\thicklines\vector(1,4){3}}
\put(9,17){\line(-1,0){5}}

\put(2,11){\thicklines\line(1,1){24}}
\multiput(3,12)(1,1){23}{\line(1,0){1}}

\put(5,5){\thicklines\vector(3,1){12}}
\put(17,9){\line(0,-1){5}}

\put(10,2){\thicklines\line(1,1){16}}
\multiput(11,3)(1,1){16}{\line(-1,0){1}}

\put(5,5){\thicklines\vector(2,3){18}}
\put(23,32){\line(-1,0){19}}
\put(23,32){\line(0,-1){28}}

\put(13,12){$\bm{a}_{1}$}
\put(6,21){$\bm{a}_{2}$}
\put(20,34){$\bm{d}$}

\put(18,1){$x_{1}$}
\put(0,24){$x_{2}$}

\end{picture}
\hspace{20\unitlength}
\begin{picture}(25,47)

\put(0,5){\vector(1,0){25}}
\put(5,2){\vector(0,1){45}}

\put(5,5){\thicklines\vector(0,1){16}}
\put(13,29){\line(-1,0){9}}

\put(2,18){\thicklines\line(1,1){22}}
\put(2.1,18){\thicklines\line(1,1){22}}
\put(1.9,18){\thicklines\line(1,1){22}}

\put(5,5){\thicklines\vector(1,3){8}}

\put(5,5){\thicklines\vector(1,2){16}}
\put(21,37){\line(-1,0){17}}
\put(21,37){\line(0,-1){33}}

\put(0,23){$\bm{a}_{1}$}
\put(8,31){$\bm{a}_{2}$}
\put(18,39){$\bm{d}$}

\put(12,1){$x_{1}$}
\put(0,32){$x_{2}$}

\end{picture}
\end{center}
\caption{A unique (left) and non-unique (middle and right) solutions to linear equations in $\mathbb{R}_{\max,+}^{2}$.}\label{F-AxdS}
\end{figure}
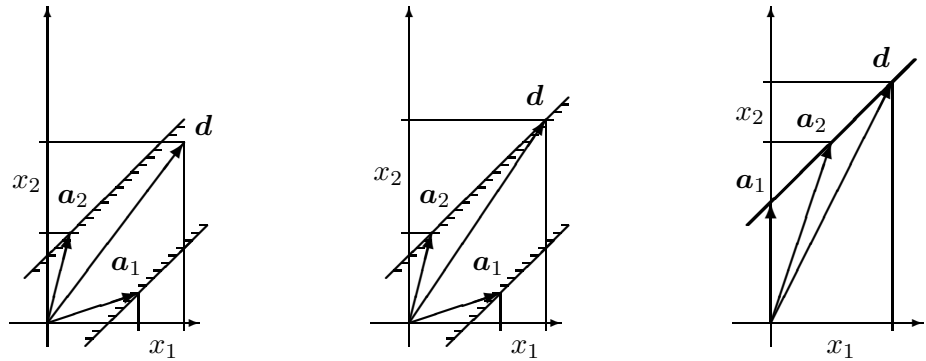

\bibliographystyle{utphys}

\bibliography{A_solution_of_a_tropical_linear_vector_equation}

\providecommand{\href}[2]{#2}\begingroup\raggedright\begin{thebibliography}{10}

\bibitem{Vorobjev1967Extremal}
N.~N. Vorobjev, ``Algebra of positive matrices,'' {\em Elektronische
  Informationsverarbeitung und Kybernetik} {\bfseries 3} no.~1, (February,
  1967) 39--71. (in Russian).

\bibitem{Cuninghame-green1979Minimax}
R.~Cuninghame-Green, {\em Minimax Algebra}, vol.~166 of {\em Lecture Notes in
  Economics and Mathematical Systems}.
\newblock Springer, Berlin, 1979.

\bibitem{Baccelli1993Synchronization}
F.~L. Baccelli, G.~Cohen, G.~J. Olsder, and J.-P. Quadrat, {\em Synchronization
  and Linearity: An Algebra for Discrete Event Systems}.
\newblock Wiley Series in Probability and Statistics. Wiley, Chichester, 1993.
\newblock \url{http://www-rocq.inria.fr/metalau/cohen/documents/BCOQ-book.pdf}.

\bibitem{Litvinov1998Correspondence}
G.~L. Litvinov and V.~P. Maslov,
  \href{http://dx.doi.org/10.1017/CBO9780511662508}{``The correspondence
  principle for idempotent calculus and some computer applications,''} in {\em
  Idempotency}, J.~Gunawardena, ed., Publications of the Newton Institute (No.
  11), pp.~420--443.
\newblock Cambridge University Press, Cambridge, 1998.
\newblock \href{http://arxiv.org/abs/0101021}{{\ttfamily arXiv:0101021
  [math.GM]}}.

\bibitem{Golan2003Semirings}
J.~S. Golan, {\em Semirings and Affine Equations Over Them: Theory and
  Applications}, vol.~556 of {\em Mathematics and Its Applications}.
\newblock Springer, New York, 2003.

\bibitem{Heidergott2006Max-plus}
B.~Heidergott, G.~J. Olsder, and J.~van~der Woude, {\em Max-plus at Work:
  Modeling and Analysis of Synchronized Systems}.
\newblock Princeton Series in Applied Mathematics. Princeton University Press,
  Princeton, 2006.

\bibitem{Butkovic2010Maxlinear}
P.~Butkovi\v{c}, \href{http://dx.doi.org/10.1007/978-1-84996-299-5}{{\em
  Max-linear Systems: Theory and Algorithms}}.
\newblock Springer Monographs in Mathematics. Springer, London, 2010.

\bibitem{Krivulin2005Onsolution}
N.~K. Krivulin, ``On solution of linear vector equations in idempotent
  algebra,'' in {\em Mathematical Models: Theory and Applications. Issue~5},
  M.~K. Chirkov, ed., pp.~105--113.
\newblock St.~Petersburg University, St.~Petersburg, 2005.
\newblock (in Russian).

\bibitem{Krivulin2009Onsolution}
N.~K. Krivulin, ``On solution of a class of linear vector equations in
  idempotent algebra,'' {\em Vestnik St.~Petersburg University. Ser.~10.
  Applied Mathematics, Informatics, Control Processes} no.~3, (2009) 64--77.
  (in Russian).

\bibitem{Krivulin2009Methods}
N.~K. Krivulin, {\em Methods of Idempotent Algebra in Problems of Complex
  Systems Modeling and Analysis}.
\newblock St.~Petersburg University Press, St.~Petersburg, 2009.
\newblock (in Russian).

\end{thebibliography}\endgroup

\end{document}